\documentclass[11pt]{article}

\usepackage[margin=1in]{geometry}
\usepackage{amsmath,amssymb,amsthm,mathtools}
\usepackage[hidelinks]{hyperref}
\usepackage{thmtools}
\usepackage{cleveref}
\usepackage{microtype}

\newtheorem{theorem}{Theorem}
\newtheorem{lemma}[theorem]{Lemma}
\newtheorem{proposition}[theorem]{Proposition}

\theoremstyle{definition}
\newtheorem{definition}[theorem]{Definition}
\theoremstyle{remark}

\newcommand{\R}{\mathbb{R}}
\newcommand{\calA}{\mathcal{A}}
\newcommand{\calM}{\mathcal{M}}

\newcommand{\ch}[1]{\chi_{#1}}
\newcommand{\st}{\,:\,}
\newcommand{\dd}{d}

\title{The singleton hypergraph is extremal for the Isolation Lemma}
\author{Vance Faber\footnote{Hoquiam, WA. Email: \texttt{vance.faber@gmail.com}}, David G. Harris\footnote{University of Maryland, Dept. of Computer Science. Email: \texttt{davidgharris29@gmail.com}}}
\date{}

\begin{document}
\maketitle

\begin{abstract}
Let $H$ be an inclusion-free hypergraph on $n$ vertices.  A weight assignment
$w:[n]\to[\dd]$ is isolating if there is a unique edge $e$ whose weight $w(e) = \sum_{i \in e} w(i)$ is minimum. We show that the number of isolating weight assignments is at least
$$
n\sum_{j=0}^{\dd-1} j^{n-1},
$$
a bound which is attained with equality by the  hypergraph consisting of the $n$ singleton edges.  This proves the conjecture stated in Faber \& Harris (2018).

We also prove the bound for a more general class of edge-weight objectives, including arbitrary edge offsets.
\end{abstract}

\section{Introduction}
The Isolation Lemma, introduced by Mulmuley, Vazirani, and Vazirani
\cite{MVV87}, is a basic tool in randomized algorithms.  In one common form, one
has a hypergraph $H$ whose edges encode the possible solutions to a search
problem.  A random weight assignment to the vertices is chosen, and one hopes there is a unique edge of minimum weight.  Such a weight assignment is
called \emph{isolating}.  When this occurs, the original
search problem has been reduced to a unique-solution problem.

The classical Isolation Lemma in \cite{MVV87} gives a lower bound of $\dd^n (1 - n/\dd)$ for the number of isolating weight assignments.  Ta-Shma \cite{TaShma15} improved the bound to $(\dd-1)^n$ via a simple and elegant argument. This was investigated further by Faber \& Harris \cite{FH18}, with a number of improved bounds; in particular, they conjectured that the minimum is attained by the singleton hypergraph, whose
edges are $\{1\},\ldots,\{n\}$. They showed the conjecture for several special
cases as well as an asymptotic regime.

In this note, we fully prove the conjecture of \cite{FH18}, giving the tight extremal bound on the number of isolating assignments.  We will work in a slightly more general setting. Throughout, $H$ is a non-empty inclusion-free hypergraph on vertex set $ [n]=\{1,\ldots,n\}$. For a function $f: H \to \R$ and weight assignment $w: [n] \rightarrow \mathbb Z$, we define the weight of an edge $e$ by
$$
fw(e) := f(e) + \sum_{i \in e} w(i)
$$

For $d \geq 1$ write $[d] = \{1, \dots, d \}$.  The main theorem is the following.

\begin{theorem}\label{thm:main}
Let $\dd, n \geq 1$.  For every $H,f$ as above,  the number of isolating weight assignments $w \in [d]^n$ on $H$ is at least
\[
       n\sum_{j=0}^{\dd-1} j^{n-1}.
\]
This bound is tight for every $\dd,n$, attained by the singleton hypergraph with $f(e) = 0$ for all $e$.
\end{theorem}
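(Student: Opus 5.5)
We first verify tightness and then argue the lower bound by induction on $n$.

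\emph{Tightness.} For the singleton hypergraph with $f\equiv 0$, an assignment $w$ is isolating exactly when $\min_i w(i)$ is attained at a unique $i$. Choose that index $i$ ($n$ ways) and set $w(i)=m$. The other $n-1$ coordinates must lie in $\{m+1,\dots,\dd\}$, which gives $(\dd-m)^{n-1}$ choices. Writing $j=\dd-m$ and summing over $m$ yields $n\sum_{j=0}^{\dd-1} j^{n-1}$. (With the convention $0^0=1$, this also handles $n=1$.)

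\emph{Lower bound: setting up the recursion.} Write $T_n(\dd)=n\sum_{j=0}^{\dd-1}j^{n-1}$. We would prove the bound by a double induction on $n$ and $\dd$, peeling off the value $1$ in a single coordinate. One option is to fix a vertex $v$ and condition on $w(v)$. A more symmetric option uses shifting. Note that $w$ and $w+\mathbf{1}$ (all coordinates incremented) change each $fw(e)$ by $|e|$; this is absorbed into a modified offset $f'(e)=f(e)+|e|$. So the class of pairs $(H,f)$ is closed under this operation, which is exactly why arbitrary offsets are allowed. Partition $[\dd]^n$ according to the set $S=\{i: w(i)=1\}$.
\begin{itemize}
\item If $S=\emptyset$, then $w=w'+\mathbf 1$ with $w'\in[\dd-1]^n$, and isolation for $(H,f)$ is isolation for $(H,f')$. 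By induction on $\dd$ this contributes at least $T_n(\dd-1)$.
\item If $S\neq\emptyset$, then on $[n]\setminus S$ we have $w=w''+\mathbf 1$ with $w''\in[\dd-1]^{[n]\setminus S}$. The coordinates in $S$ are fixed.
\end{itemize}

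\emph{Lower bound: the restricted hypergraphs.} For fixed $S$ in the second case, restrict to the coordinates outside $S$. Each edge $e$ becomes $e\setminus S$ with offset $f(e)+|e\cap S|+|e\setminus S|$. This is no longer inclusion-free, and distinct edges may collapse. We would handle it by keeping only the minimal-weight representatives. An edge $e'$ strictly containing $e$ after restriction always has strictly larger weight, since all weights are positive. Hence it can be deleted without affecting the minimum or its uniqueness, except that ties between collapsed equal sets could destroy isolation.

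The target identity is
\[T_n(\dd)-T_n(\dd-1)=n(\dd-1)^{n-1}.\]
So the $S\neq\emptyset$ part must contribute at least $n(\dd-1)^{n-1}$. In the extremal example this comes exactly from $|S|=1$, namely $n$ choices of the singleton, each with every other coordinate in $\{2,\dots,\dd\}$.

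\emph{Main obstacle.} The hard step is showing that for general $(H,f)$, the assignments with $w^{-1}(1)\neq\emptyset$ contribute at least $n(\dd-1)^{n-1}$ isolating ones. We would try an injection or averaging argument. For each vertex $v$, consider the assignments with $w(v)=1$ and $w(u)\geq 2$ for all $u\ne v$, and compare them with the assignments differing only at $v$. The aim is a Ta-Shma-style charging: for each $u\in[\dd-1]^{n-1}$ on the other coordinates, find among the $|S|\geq1$ assignments at least one isolating one per vertex. Where a vertex fails to yield one, we would compensate using assignments with $|S|\geq 2$, via a Kruskal--Katona or shadow-type counting over $S$.

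If this direct charging fails, the fallback is a stronger inductive hypothesis: a lower bound on the isolating assignments whose minimum edge avoids or contains a prescribed vertex. This is in the spirit of the FH18 special cases, and would let the recursion on $n$ (conditioning on $w(v)$) close.
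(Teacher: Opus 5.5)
Your tightness count is correct. Your shifting step is exactly the paper's reduction: you write layer-$\geq 2$ assignments as $w'+\mathbf{1}$ with $w'\in[d-1]^n$, absorb the shift into $f'(e)=f(e)+|e|$, and induct on $d$. What this shows is that the theorem is equivalent to the layer-one bound $|Z_1(H,d,f)|\ge n(d-1)^{n-1}$ for all inclusion-free $H$ and all offsets $f$. That bound is the entire substance of the theorem, and your proposal does not prove it. The ``main obstacle'' paragraph lists strategies (an injection, Kruskal--Katona compensation from $|S|\ge 2$, a strengthened induction hypothesis) but carries none of them out. So there is a genuine gap at the only nontrivial step. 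Separately, the claim in your restricted-hypergraph paragraph is false. If $e'\setminus S\supsetneq e\setminus S$, the two edges still carry different offsets $f(e'),f(e)$ and different contributions $|e'\cap S|,|e\cap S|$ from the fixed coordinates, so $e'$ need not be heavier.

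The missing idea is how to handle the case that defeats a Ta-Shma-style map. Take $w$ with a unique one-coordinate $i$. If some minimum edge $e$ avoids $i$, then $w-\chi_e$ stays in $[d]^n$ and in layer one, and is isolating with isolated edge $e$. It is recoverable from its image $u$ as $u+\chi_e$, so this is the easy case. If \emph{every} minimum edge contains $i$, then $w-\chi_e$ has a zero coordinate, and a direct injection breaks down.

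The paper's fix has two parts. First, it generalizes Ta-Shma: if $(e,S)$ is a private pair of the antichain $\mathcal M(w)$ of minimum edges ($S\subseteq e$ and $S\not\subseteq h$ for every other minimum edge $h$), then $w-\chi_S$ is isolating with isolated edge $e$. Choosing $S\subseteq e\setminus\{i\}$ keeps it in layer one. Second, it uses a \emph{fractional} charging: $w$ sends charge $1/\bigl(|e|\binom{|e|-1}{|S|}\bigr)$ to $w-\chi_S$ for every private pair $(e\setminus\{i\},S)$ of $\{h\setminus\{i\}: h\in\mathcal M(w)\}$.

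Outgoing charge is at least $1$. Each member $A$ of an $m$-element antichain has a private set $T_A$ of size at most $m-1$ (one witness $x_B\in A\setminus B$ per other member $B$). The identity $\sum_{T\subseteq S\subseteq A}1/\binom{|A|}{|S|}=(|A|+1)/(|T|+1)$ (hockey stick) then gives each $A$ a contribution $\ge 1/m$.

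Incoming charge at an isolating layer-one $u$ with isolated edge $e$ is at most $1$. If $I(u)\not\subseteq e$, the only possible source is $u+\chi_e$ from the easy case. If $I(u)\subseteq e$, every source has its one-coordinate $i\in I(u)$ and satisfies $I(u)\setminus\{i\}\subseteq S\subseteq e\setminus\{i\}$. The same identity then bounds the total by $\sum_{i\in I(u)}1/|I(u)|=1$.

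Double counting yields $|Z_1(H,d,f)|\ge n(d-1)^{n-1}$, which, combined with your shifting step, completes the proof.
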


(Here and throughout we use the convention $0^0 = 1$ to cover a few edge cases in the formulas.)

\section{Definitions and basic reductions}

We define $\calM_{H,f}(w)$ to be the set of minimum-weight edges, i.e. 
$$
\calM_{H,f}(w) = \{e \in H: fw(e)\leq fw(h) \text{ for every $h\in H$} \}.
$$
The weight assignment $w$ is
\emph{isolating} if $|\mathcal M_{H,f}(w)| = 1$, that is, there is exactly one minimum-weight edge.  In that case, the
unique minimum-weight edge is called the \emph{isolated edge}.   Note that, since $H$ is inclusion-free, the set family $\mathcal M_{H,f}(w)$ is necessarily an antichain.

For a set $S \subseteq [n]$, we define the indicator vector for $S$ by $\chi_S$, i.e. $$
\chi_S(i) = \begin{cases}
1 & \text{for $i \in S$} \\
0 & \text{for $i \notin S$}
\end{cases}
$$

The key insight of \cite{TaShma15} was that an arbitrary weight assignment could be transformed into an isolating assignment by subtracting the indicator of a minimum-weight edge. 

\begin{proposition}[\cite{TaShma15}]
\label{tashma-prop}
Let $e \in \calM_{H,f}(w)$ for a weight assignment $w$. Then the weight assignment
$$
u = w - \chi_e
$$
is isolating, with isolated edge $e$.
\end{proposition}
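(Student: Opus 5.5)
We need to prove Proposition \ref{tashma-prop}: if $e \in \calM_{H,f}(w)$ and $u = w - \chi_e$, then $e$ is the unique minimum-weight edge under $u$. The plan is a direct computation: first write $fu(h)$ in terms of $fw(h)$ for every edge $h$, then compare $fu(e)$ with $fu(h)$ for each $h \neq e$.

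For any edge $h \in H$,
$$
fu(h) = f(h) + \sum_{i \in h} \bigl(w(i) - \chi_e(i)\bigr) = fw(h) - |h \cap e|.
$$
In particular $fu(e) = fw(e) - |e|$.

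Now take $h \in H$ with $h \neq e$. Since $H$ is inclusion-free, $e \not\subseteq h$, so $|h \cap e| \le |e| - 1$. Since $e \in \calM_{H,f}(w)$, we also have $fw(h) \ge fw(e)$. Combining these,
$$
fu(h) = fw(h) - |h\cap e| \ge fw(e) - |e| + 1 = fu(e) + 1 > fu(e).
$$
Hence $e$ is the unique element of $\calM_{H,f}(u)$, so $u$ is isolating with isolated edge $e$.

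There is no real obstacle; the only point needing care is that inclusion-freeness is what gives the strict inequality $|h \cap e| < |e|$. The proposition does not require $u$ to lie in $[\dd]^n$ (it may have zero entries), so no range issues arise. That range question matters only later, when the proposition is used to count assignments in $[\dd]^n$.
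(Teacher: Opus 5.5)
Your proof is correct and is essentially the paper's argument: both reduce to the identity $fu(h)-fu(e)=fw(h)-fw(e)+|e\setminus h|$ and use minimality of $e$ together with inclusion-freeness. The paper splits into cases according to whether $h\in\calM_{H,f}(w)$, whereas you note directly that $|e\setminus h|\ge 1$ for every $h\neq e$, so the case split is not needed.
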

\begin{proof}
Let $h\in H$ with
$h\neq e$. Then
\begin{align*}
        fu(h) - fu(e) = fw(h) - fw(e) + | e \setminus h |
\end{align*}

If $h \notin \calM_{H,f}(w)$, then $fw(h) - fw(e) > 0$ strictly. If \(h\in\calM_{H,f}(w)\), then \(e\setminus h\neq\emptyset\) since
$H$ is inclusion-free. In all cases, $fu(h) > fu(e)$, and $e$ is the unique minimum edge under $u$.
\end{proof}

Our proof will need an extension of \Cref{tashma-prop}, using the concept of \emph{private pairs} in an antichain.
\begin{definition}
Let $\calA$ be an antichain. We call $(A,S)$ a \emph{private pair} of $\cal A$ if $A \in \cal A$, $S \subseteq A$, and 
\[
        S\nsubseteq B \qquad\text{for every }B\in\calA \setminus \{A \} 
\]
\end{definition}

\begin{proposition}
\label{fh-prop}
 Suppose that $(e,S)$ is a private pair of the antichain $\calM_{H,f}(w)$. Then the weight assignment
$$
u = w - \chi_S
$$
is isolating, with isolated edge $e$.
\end{proposition}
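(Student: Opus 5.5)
The plan is to mirror the proof of \Cref{tashma-prop} directly. Fix $h \in H$ with $h \neq e$ and compute the weight gap under $u = w - \chi_S$. Since $S \subseteq e$, we have $fu(e) = fw(e) - |S|$, while $fu(h) = fw(h) - |S \cap h|$. Hence
\[
fu(h) - fu(e) = fw(h) - fw(e) + |S| - |S \cap h| = fw(h) - fw(e) + |S \setminus h|.
\]
It then suffices to show that this quantity is strictly positive for every $h \neq e$. Note that $e \in \calM_{H,f}(w)$, because $e$ belongs to the antichain by the definition of a private pair; therefore $fw(h) - fw(e) \geq 0$ for every $h \in H$.

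I would split into the same two cases as before. If $h \notin \calM_{H,f}(w)$, then $fw(h) - fw(e) > 0$ strictly, and $|S \setminus h| \geq 0$, so the gap is positive. If instead $h \in \calM_{H,f}(w)$ with $h \neq e$, then $fw(h) = fw(e)$. The private-pair condition gives $S \nsubseteq h$, so $S \setminus h \neq \emptyset$ and the gap is at least $1$.

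In both cases $fu(h) > fu(e)$, so $e$ is the unique minimum-weight edge under $u$, and $u$ is isolating with isolated edge $e$. There is no real obstacle here. The only points needing care are the use of $S \subseteq e$ in computing $fu(e)$, and the observation that the privacy condition exactly replaces inclusion-freeness in the second case. Taking $S = e$ recovers \Cref{tashma-prop}, since inclusion-freeness makes $(e,e)$ a private pair.
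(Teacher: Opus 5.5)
Your proof is correct and follows the paper's argument: the same identity $fu(h)-fu(e)=fw(h)-fw(e)+|S\setminus h|$ from $S\subseteq e$, and the same two-case split in which the private-pair condition takes the place of inclusion-freeness. Your closing remark that $S=e$ recovers Ta-Shma's proposition also matches the paper's note.
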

\begin{proof}
 Let $h\in H$ with $h\neq e$.  Since $S\subseteq e$, we have
\begin{equation}
\label{glab}
        fu(h)-fu(e)=fw(h)-fw(e) + |S \setminus h|
\end{equation}

If $h\notin\calM_{H,f}(w)$, then $fw(h)>fw(e)$ strictly.  If $h\in\calM_{H,f}(w)$, then since $(e,S)$ is a private pair of $\calM_{H,f}(w)$, we have $S\setminus h \neq \emptyset$. In all cases
$fu(h)>fu(e)$, and $e$ is the unique minimum edge under $u$.
\end{proof}

 Note that \Cref{tashma-prop} is just the special case of \Cref{fh-prop} with $S = e$.

Following \cite{FH18}, we will partition the isolating assignments by layers, and show an extremal bound on each layer separately. Formally, the \emph{layer} of a weight assignment $w\in[\dd]^n$ is the minimum value of  $w(i)$ over $i \in [n]$.  Define
\begin{align*}
        Z(H,\dd,f) &=\{w\in[\dd]^n \st w \text{ is isolating for }H,f\} \\
        Z_j(H,\dd,f)&=\{w\in Z(H,\dd,f) \st \text{$w$ in layer $j$} \}, \qquad \text{for $j \in [\dd]$}
        \end{align*}

Our main bound will be the following:
\begin{theorem}\label{thm:layer-one}
Let $\dd, n \geq 1$. For every $H, f$, we have
\[
        |Z_1(H,\dd,f)|\geq n(\dd-1)^{n-1}.
\]
\end{theorem}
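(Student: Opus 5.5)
The plan is to build a weighted correspondence between layer-one isolating assignments and a domain of size exactly $n(\dd-1)^{n-1}$, using \Cref{fh-prop} to produce isolating assignments. Let
\[
D=\{(i,x)\st i\in[n],\ x\in\{2,\dots,\dd\}^{[n]\setminus\{i\}}\},
\]
so $|D|=n(\dd-1)^{n-1}$. To each $(i,x)\in D$ attach the assignment $w=w_{i,x}\in\{2,\dots,\dd\}^n$ that agrees with $x$ off $i$ and has $w(i)=2$. The fibre of $w\in\{2,\dots,\dd\}^n$ under $(i,x)\mapsto w_{i,x}$ has size $t(w)=|T(w)|$, where $T(w)=\{j\st w(j)=2\}$. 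It therefore suffices to show
\[
|Z_1(H,\dd,f)|\;\ge\;\sum_{w\in\{2,\dots,\dd\}^n} t(w).
\]

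The source of layer-one isolating assignments is \Cref{fh-prop}. If $(e,S)$ is a private pair of $\calM_{H,f}(w)$ with $S\cap T(w)\neq\emptyset$, then $u=w-\chi_S$ lies in $[\dd]^n$, is isolating with isolated edge $e$, and has minimum value exactly $1$. So $u\in Z_1$. I would form the bipartite graph $G$ that joins each pair $(w,j)$ with $j\in T(w)$ (a copy of $D$) to every $u\in Z_1$ obtainable in this way from $w$ with $j\in S$. I would then verify Hall's condition on the $D$-side, or more simply exhibit a fractional matching. In the extremal singleton case everything is rigid: $\calM(w)$ consists of the singletons $\{j\}$ with $j\in T(w)$, the only private pair using $j$ is $(\{j\},\{j\})$, and this gives a bijection. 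That is a useful sanity check.

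For the general count, fix $u\in Z_1$ with isolated edge $e$, and put $U=\{j\st u(j)=1\}$. Every preimage has the form $w=u+\chi_S$ with $U\subseteq S\subseteq e$, since $w\ge 2$ forces this. For each $j\in S\cap U$ we also have $w(j)=2$. So $u$ has degree at most $\sum_{S}|S\cap U|$ over the admissible $S$. The natural fractional matching gives each edge $((w,j),u)$ weight $1/\deg(u)$, and the task is to show that every $(w,j)$ receives total weight at least $1$. The simplest choice on the $D$-side is $S=\{j\}$ when $\{j\}$ is private for some minimum edge containing $j$. Otherwise I would choose $S$ minimal private containing $j$, which keeps $\deg(u)$ small. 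To force $U\subseteq S$, I would prefer to take $S=U\cap e$ (together with the minimal additions needed for privacy) and argue uniqueness of the preimage.

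The main obstacle is exactly this step: showing that collisions, where different $(w,j)$ map to the same $u$, are compensated, either because every $j\in T(w)$ lies in some minimum edge admitting a private set through $j$, or by a Hall-type surplus. Note that $j\in T(w)$ need not lie in any minimum edge at all. If this direct matching argument stalls, my fallback is induction on $n$ by conditioning on $w(n)$. Setting $w(n)=c$ merges $c$ into the offsets $f$ of the edges containing $n$, which is why the general offsets $f$ are essential. One then recovers $n(\dd-1)^{n-1}$ from $(n-1)(\dd-1)^{n-2}$ per value $c\in\{2,\dots,\dd\}$, plus $(\dd-1)^{n-1}$ from the case $w(n)=1$. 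Inclusion-freeness of the contracted family would then have to be restored, by discarding edges that contain others, which only helps isolation.
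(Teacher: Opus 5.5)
\textbf{There is a genuine gap.} Your reduction to $D$ is fine; it is the paper's set $X=\{w:|I(w)|=1\}$ in disguise, via $(w,j)\mapsto w-\chi_{\{j\}}$. But the step you flag as the main obstacle is the entire content of the proof, and the graph you define cannot support it. You take minimum edges of $w\in\{2,\dots,d\}^n$, and these do not depend on the marked coordinate $j$. If $j$ lies in no minimum edge, then $(w,j)$ has no neighbours at all. So neither Hall's condition nor any fractional matching can work, whether with weights $1/\deg(u)$ or any others.

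Concretely, take $n=2$, $d=3$, $H=\{\{1\},\{2\}\}$, $f(\{1\})=0$, $f(\{2\})=10$. Then $|D|=4$, but your graph reaches only $u=(1,2)$ and $u=(1,3)$. The isolating assignments $(1,1),(2,1),(3,1)$ are never produced, because every $u=w-\chi_S$ with $S\subseteq\{1\}$ keeps $u(2)\ge 2$.

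The missing idea is to compute the minimum edges of $w'=w-\chi_{\{j\}}$, with the one-coordinate already in place, and split into two cases.
\begin{itemize}
\item If some minimum edge $e$ avoids $j$, send charge $1$ to $u=w'-\chi_e$. This $u$ stays in layer one because $u(j)=1$ survives, and its preimage is forced to be $u+\chi_e$.
\item If every minimum edge contains $j$, take each private pair $(e\setminus\{j\},S)$ of the antichain $\{e\setminus\{j\}: e\in\mathcal{M}(w')\}$ and send charge $1/\bigl(|e|\binom{|e|-1}{|S|}\bigr)$ to $u=w'-\chi_S$.
\end{itemize}
On the receiving side the two cases are separated by whether $I(u)\subseteq e$. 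The specific binomial weights are what make both sides balance, and degree-based weights have no reason to satisfy the outgoing bound. The identity $\sum_{T\subseteq S\subseteq A}1/\binom{|A|}{|S|}=(|A|+1)/(|T|+1)$, together with a private core $T_A$ of size at most $|\mathcal{M}(w')|-1$, gives outgoing charge at least $1$. The same identity with $A=e\setminus\{i\}$ and $T=I(u)\setminus\{i\}$, summed over the $|I(u)|$ possible original one-coordinates $i$, gives incoming charge at most $1$.

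Your fallback induction also fails. Fixing $w(n)=c$ does not preserve inclusion-freeness, and the layer-one bound is false without it. The inclusion-free family $H=\{\{1,3\},\{1,2\}\}$ with $f(\{1,3\})=1-c$ and $f(\{1,2\})=0$ contracts at $w(3)=c$ to $\{\{1\},\{1,2\}\}$ with offsets $1$ and $0$. The edge weights are then $1+w(1)$ and $w(1)+w(2)$, so they tie exactly when $w(2)=1$. The layer-one isolating assignments are therefore exactly those with $w(1)=1$ and $w(2)\ge 2$. There are $d-1$ of them, fewer than the $2(d-1)$ the induction needs (for $d\ge 2$). Discarding the containing edge $\{1,2\}$ changes which assignments are isolating; here it would make all of them isolating. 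Moreover, your claim that discarding ``only helps isolation'' points in the wrong direction for a lower bound.
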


This bound is tight for each $\dd,n$: for the singleton hypergraph $S_n = \{ \{1 \}, \dots, \{n \} \}$ and the weight function $f \equiv 0$, a weight assignment $w$ in layer one is isolating if and only if it has exactly one coordinate with value one.

This extremal bound, although it only applies to $Z_1$, will allow us to bound the entire set $Z$.

\begin{proof}[Proof of \Cref{thm:main}, given \Cref{thm:layer-one}]
The value for the singleton hypergraph is immediate.  For the lower bound, clearly $$
|Z(H,\dd,f)| = \sum_{j=1}^{\dd} |Z_j(H,\dd,f)|.
$$

For each layer $j$, define
$f'_j:H \to\R$ by
\[
        f'_j(e)=f(e) + (j-1) |e|.
\]

For any layer-one weight assignment $w' \in [\dd-j+1]^n$, define a weight assignment $w \in [\dd]^n$ by
$$
w(i) = w'(i) + j-1
$$

This weight assignment $w$ is in layer $j$, and every such $w$ corresponds to a unique $w'$. Furthermore $fw(e) = f'_j w' (e)$ for all $e$.  So if $w'$ is isolating for $H, f'_j$  then $w$ is isolating for $H,f$. In particular, 
$$
|Z_j(H,\dd,f)| \geq |Z_1(H, \dd-j+1, f'_j)|
$$

By \Cref{thm:layer-one}, this gives
$$
|Z_j(H,\dd,f)| \geq n (\dd-j)^{n-1}
$$
and hence
\[
|Z(H,\dd,f)| \geq \sum_{j=1}^{\dd} n (\dd-j)^{n-1} = n \sum_{j=0}^{\dd-1} j^{n-1}. \qedhere
\]
\end{proof}

This argument is the main reason we need to consider arbitrary edge offsets $f$. It remains to show the key extremal bound \Cref{thm:layer-one}.
\section{A private-pair lemma}

The proof of the layer-one theorem uses some basic counting lemmas for
antichains.

\begin{lemma}
\label{binom-sum}
For any finite sets $T \subseteq A$, there holds
$$
\sum_{S: T \subseteq S \subseteq A} \frac{1}{\binom{|A|}{|S|}} = \frac{|A|+1}{|T|+1}.
$$
\end{lemma}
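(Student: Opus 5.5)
We reduce the identity to a Beta-integral computation. Set $a = |A|$ and $t = |T|$. The sets $S$ with $T \subseteq S \subseteq A$ are determined by choosing which of the $a-t$ elements of $A \setminus T$ to include. So grouping by $s = |S|$, the left-hand side equals
$$
\sum_{s=t}^{a} \binom{a-t}{s-t} \frac{1}{\binom{a}{s}}.
$$

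Next we use the standard identity $\frac{1}{\binom{a}{s}} = (a+1)\int_0^1 x^s (1-x)^{a-s}\, dx$, which is the Beta integral $B(s+1,a-s+1) = \frac{s!\,(a-s)!}{(a+1)!}$. Substituting this and interchanging the finite sum with the integral gives
$$
(a+1)\int_0^1 \sum_{s=t}^{a} \binom{a-t}{s-t} x^s (1-x)^{a-s}\, dx .
$$
We then factor out $x^t$ and reindex by $k = s-t$. The inner sum becomes
$$
x^t \sum_{k=0}^{a-t}\binom{a-t}{k} x^k (1-x)^{a-t-k} = x^t\bigl(x + (1-x)\bigr)^{a-t} = x^t ,
$$
by the binomial theorem. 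Hence the whole expression equals $(a+1)\int_0^1 x^t\,dx = \frac{a+1}{t+1}$, as claimed.

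Two edge cases need a check. When $T = A$, the sum has the single term $S = A$, which contributes $1/\binom{a}{a} = 1 = \frac{a+1}{a+1}$. When $a = 0$, the convention $0^0 = 1$ in the integrand is harmless.

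No step is a serious obstacle. The only point needing care is recalling the Beta-integral representation of $1/\binom{a}{s}$. If we wanted to avoid integrals, an alternative is induction on $|A \setminus T|$. Pick $v \in A \setminus T$ and split the sum according to whether $v \in S$. This gives the sum for $(T \cup \{v\}, A)$ plus the sum for $T$ relative to $A$ without $v$ in $S$. These two pieces can be handled with the identity $\frac{1}{\binom{a}{s}} = \frac{a+1}{a+2}\Bigl(\frac{1}{\binom{a+1}{s}} + \frac{1}{\binom{a+1}{s+1}}\Bigr)$. However, the integral route is cleaner, and we would present that one.
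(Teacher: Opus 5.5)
Your proof is correct, but after the shared first step of grouping by $s=|S|$ it takes a different route from the paper. The paper stays with finite algebra. It rewrites each summand as $\binom{a-t}{s-t}/\binom{a}{s} = \binom{s}{t}/\binom{a}{t}$ and then evaluates $\sum_{s=t}^{a}\binom{s}{t} = \binom{a+1}{t+1}$ by the hockey-stick identity, so no analysis is involved. You instead write $1/\binom{a}{s} = (a+1)B(s+1,a-s+1)$, after which the binomial theorem collapses the sum. This costs an appeal to the Beta integral, but it gives a clean interpretation. The quantity $\frac{1}{(a+1)\binom{a}{|S|}}$ is exactly the probability of obtaining $S$ when one draws $x$ uniformly from $[0,1]$ and includes each element of $A$ independently with probability $x$. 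The lemma then says that $\Pr[T\subseteq S] = \mathbb{E}[x^{|T|}] = \frac{1}{|T|+1}$ under this random-subset model. In that language, the weights in Lemma~\ref{lem:private-subset} and in the incoming-charge sum \eqref{eq:incoming-sum} are just probabilities, which makes the bounds $1/m$ and $1/|I(u)|$ there immediate. Your separate edge-case checks are harmless but redundant, since the integral computation already covers $T=A$ and $a=0$.
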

\begin{proof}
Let \(a=|A|\) and \(t=|T|\).  There are precisely 
\(\binom{a-t}{k-t }\) choices for \(S\) with $|S| = k$.  Hence the left-hand side is
\begin{equation}
\label{yer1}
    \sum_{k=t}^{a}
        \frac{\binom{a-t}{k-t}}{\binom{a}{k}}.
\end{equation}
For \(t\leq k\leq a\), we rewrite the summand as
\[
    \frac{\binom{a-t}{k-t}}{\binom{a}{k}}
    =
    \frac{(a-t)!}{(k-t)! (a-k)!}
    \cdot
    \frac{k! (a-k)!}{a!}  
    =
    \frac{\binom{k}{t}}{\binom{a}{t}}.
    \]

So the sum (\ref{yer1}) is
$$
\frac{1}{\binom{a}{t}} \sum_{k=t}^{a} \binom{k}{t}
$$

By the hockey-stick identity,
\[
\sum_{k=t}^{a} \binom{k}{t} = \binom{a+1}{t+1}
\]

So the overall sum is indeed
\[
\frac{ \binom{a+1}{t+1} }{\binom{a}{t}} = \frac{ (a+1)! }{ (t+1)! (a-t)! } \cdot \frac{ t! (a-t)! }{ a! } = \frac{a+1}{t+1}. \qedhere
\]
\end{proof}

\begin{lemma}\label{lem:private-subset}
Let $\calA$ be a non-empty antichain of finite sets.  Then
\begin{equation}
\label{grub0}
        \sum_{\substack{(A,S) \\\text{private pair of $\mathcal A$}}}
        \frac{1}{(|A|+1) \binom{|A|}{|S|}}
        \geq 1.
\end{equation}
\end{lemma}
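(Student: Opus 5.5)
The plan is to read the summand probabilistically and then use a union bound. For a fixed $A$ with $|A|=a$, the weight $\frac{1}{(a+1)\binom{a}{s}}$ is exactly the probability of the following: draw a size $s$ uniformly from $\{0,\dots,a\}$, then draw a uniform $s$-subset $S$ of $A$. So for each $A$ these weights form a probability distribution on subsets $S\subseteq A$; this is \Cref{binom-sum} with $T=\emptyset$. The inequality (\ref{grub0}) therefore says that the sum over $A\in\calA$ of $\Pr[S_A\text{ is private for }A]$ is at least $1$. It would suffice to couple all the random sets $S_A$ on one probability space so that, for every outcome, at least one $A$ has $(A,S_A)$ private.

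Before committing to a coupling I would extract a cheap structural fact from \Cref{binom-sum}. For a fixed $A$, the family of private sets $S\subseteq A$ is an up-set in $2^A$: it is the complement of $\bigcup_{B\neq A}2^{A\cap B}$, and it contains $A$ because $\calA$ is an antichain. Hence, if $T$ is private for $A$, every $S$ with $T\subseteq S\subseteq A$ is private. By \Cref{binom-sum} these $S$ contribute
\[
\frac{1}{|A|+1}\cdot\frac{|A|+1}{|T|+1}=\frac{1}{|T|+1}
\]
to (\ref{grub0}). So the sum is at least $\sum_{A}1/(t_A+1)$, where $t_A$ is the minimum size of a private subset of $A$. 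This already settles easy cases, for example $|\calA|=1$, where $t_A=0$.

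For the general case I would use a coupling through a common random order. Take a uniformly random linear order $\sigma$ of $U=\bigcup\calA$ and an independent uniform $x\in[0,1)$. For each $A$, let $S_A$ be the first $\lfloor x(|A|+1)\rfloor$ elements of $A$ in the order $\sigma$; this has exactly the marginal distribution above. The main obstacle is showing that for every $(\sigma,x)$ some $(A,S_A)$ is private. A failure means that for every $A$ there is a $B\neq A$ containing the initial $\sigma$-segment of $A$ of length $\lfloor x(|A|+1)\rfloor$. My first attempt would be an extremal choice: take $A$ whose prefix $S_A$ is inclusion-maximal among the sets $S_{A'}$, breaking ties by the $\sigma$-position of the last element of $S_A$. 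I would then derive a contradiction from the proportional-length rule, since a $B\supseteq S_A$ that is not equal to $A$ should have a longer or incompatible prefix. I am not sure this rule is the right one. A single shared cut point, where $S_A=A\cap P$ for a random prefix $P$, visibly fails when $P=\emptyset$ and there are two or more sets; that is why the cut has to scale with $|A|$.

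If no pointwise coupling works, I would fall back on induction on $|U|$. Pick $v\in U$ and split $\calA$ into the sets containing $v$ and those avoiding it. Apply the hypothesis to the antichains $\{A\setminus\{v\}\colon v\in A\}$ (after removing non-maximal members) and $\{A\colon v\notin A\}$. Then, using \Cref{binom-sum}, check how the weights $\frac{1}{(a+1)\binom{a}{s}}$ transform when $v$ is added to or removed from $A$ and $S$; the identity $\frac{1}{(a+1)\binom{a}{s}}=\frac{1}{(a+2)\binom{a+1}{s}}+\frac{1}{(a+2)\binom{a+1}{s+1}}$ is the natural bookkeeping tool there.
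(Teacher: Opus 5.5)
\textbf{There is a genuine gap: you never bound $t_A$, and the coupling you commit to fails.}

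Your second paragraph is the paper's argument up to its last line. The private subsets of $A$ form an up-set, and \Cref{binom-sum} turns any private $T\subseteq A$ into a contribution of $1/(|T|+1)$. So the sum in (\ref{grub0}) is at least $\sum_{A\in\calA}1/(t_A+1)$.

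The missing step is the bound $t_A\le m-1$, where $m=|\calA|$. For each $B\in\calA\setminus\{A\}$, pick some $x_B\in A\setminus B$; this is possible because $\calA$ is an antichain. The set $T_A=\{x_B : B\neq A\}$ lies in $A$, is not contained in any $B\neq A$ (it contains $x_B\notin B$), and has at most $m-1$ elements. Hence $\sum_A 1/(t_A+1)\ge m\cdot\frac{1}{m}=1$, which completes the proof. This is exactly how the paper argues. As written, your bound only settles the case $m=1$.

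The route you actually pursue does not work. Your proportional-cut coupling fails for the same reason as the shared cut you rejected. Whenever $x<1/(\max_{A\in\calA}|A|+1)$, every $\lfloor x(|A|+1)\rfloor$ equals $0$, so every $S_A=\emptyset$. But $(A,\emptyset)$ is private only when $\calA=\{A\}$. So for $|\calA|\ge 2$ there is a set of outcomes of positive probability with no private pair, and no extremal choice of $A$ can rescue the pointwise claim. A covering coupling does exist abstractly once $\sum_A\Pr[S_A\text{ private}]\ge 1$ is known, but constructing it presupposes the inequality.

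The induction on $|U|$ is only an outline. You do not say how the two inductive bounds combine. You also do not say how private pairs behave when you discard non-maximal members of $\{A\setminus\{v\}\}$. So it cannot replace the one-line witness argument above.
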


\begin{proof}
Let $m=|\calA|$.  Fix $A\in\calA$.  For every $B \in \calA \setminus \{ A \}$,
choose an element $x_B\in A\setminus B$, possible because $\calA$ is an
antichain.  Set
\[
        T_A=\{x_B\st B\in\calA \setminus \{A \} \}.
\]
Then $T_A\subseteq A$, $|T_A|\leq m-1$, and $(A,T_A)$ is a private pair of $\calA$: for every
$B\neq A$, the set $T_A$ contains $x_B\notin B$, and hence $T_A\nsubseteq B$. Every pair $(A,S)$ with $T_A\subseteq S\subseteq A$ is also a private pair of 
$\cal A$. So the contribution from $A$ to the sum in (\ref{grub0}) is at least
\[
       \frac{1}{|A|+1} \sum_{T_A\subseteq S\subseteq A}
        \frac{1}{\binom{|A|}{|S|}}.
\]

By \Cref{binom-sum}, we have
$$
\sum_{T_A\subseteq S\subseteq A}
        \frac{1}{\binom{|A|}{|S|}} = \frac{ |A|+1 }{|T_A|+1}
        $$

So the contribution from
$A$ is at least
\[
        \frac{1}{|T_A|+1}\geq \frac{1}{m}.
\]
Summing this lower bound over the $m$ choices of $A\in\calA$ proves the lemma.
\end{proof}

\section{Proof of \Cref{thm:layer-one}}
Let us fix non-empty inclusion-free $H$ and $\dd, f$. For a weight assignment $w \in [d]^n$, define
$$
I(w) = \{ i \in [n]: w(i) = 1 \}.
$$

We refer to the elements of $I(w)$ as \emph{one-coordinates} of $w$. With this notation, define
\begin{align*}
        X &= Z_1(S_n, \dd, 0) =\{w\in[\dd]^n \st |I(w)| = 1 \}  \\
        U &= Z_1(H,\dd,f) = \{w \in [\dd]^n: \text{$w$ is isolating for $H,f$ and $I(w) \neq \emptyset$} \}
        \end{align*}
        
Then
\[
        |X|=n(\dd-1)^{n-1}.
\]
The goal of this section is to prove that 
$$
|U|\geq |X|.
$$

The proof strategy is to construct non-negative charges $a(w,u)$ for $w\in X$ and
$u\in U$, such that every $w\in X$ sends total charge at least one and
every $u\in U$ receives total charge at most one.  By a double-counting argument, this will imply that $|U|\geq |X|$. It is similar to the proof strategy of \cite{FH18}, which constructed a bipartite graph between $X$ and $U$; the key difference is to allow fractional weights for each pair $a(w,u)$.

\subsection{Constructing the outgoing charges}
We now construct the charges coming out of weight assignments $w \in X$; we will show that
$$
\sum_{u} a(w,u) \geq 1
$$
 
 Take $w\in X$, and let $i$ be the unique element in $I(w)$, and let $\calM(w) = \calM_{H,f}(w)$ be the family of minimum-weight edges under $w$.  There are two cases for the outgoing charge assignment (all charges not specified are set to zero).
 
\subsection*{Case I: some minimum edge avoids $i$}

Choose an arbitrary edge $e\in\calM(w)$ with $i\notin e$.  Set
\[
        u=w-\ch{e},
\]
and set
\[
        a(w,u) := 1.
\]
Since $i\notin e$, the vector
$u$ lies in $[\dd]^n$ and has layer one. By \Cref{tashma-prop}, $u$ is isolating, with isolated edge $e$. Clearly, in this case, $w$ sends out one unit of charge. 

\subsection*{Case II: every minimum edge contains $i$}

Now suppose that every edge in $\calM(w)$ contains $i$. Consider the set family:
\[
        \calA(w)=\{e \setminus \{i \} \st e\in\calM(w)\}
\]

Since $\calM(w)$ is an antichain,  this set family $\calA(w)$ is also an antichain. For each private pair $(e \setminus \{i \},S)$ of $\calA (w)$,  set
\[
        u=w-\ch{S}
\]
and set
\[
        a(w,u) := \frac{1}{ |e|  \binom{|e|-1}{|S|}}.
\]

Consider any such $u$. Since $S \subseteq e \setminus \{i \}$, and $I(w) = \{i \}$, the vector $u$ lies in $[\dd]^n$ and has layer one.  Since every set in \(\calM(w)\) contains \(i\), the pair $(e, S)$ is a private pair of $\calM (w)$.   So \Cref{fh-prop} implies that $u$ is isolating, with isolated edge $e$. 

Any target
\(u=w-\chi_S\) can arise from at most one edge \(e\in\calM(w)\). So the total charge sent from $w$ is 
$$
\sum_{\substack{(e \setminus \{i \}, S)  \\ \text {private pair of $\calA (w)$}}} 
\frac{1}{ |e|  \binom{|e|-1}{|S|}} = 
\sum_{\substack{(e', S)  \\ \text {private pair of $\calA (w)$}}} 
\frac{1}{(|e'|+1) \binom{|e'|}{|S|}}
$$

By Lemma~\ref{lem:private-subset}, this is at least one.

\paragraph{Key observation.} In both cases, when $u$ receives a non-zero charge from $w$, then the isolated edge in $u$ is the chosen edge $e \in \mathcal M(w)$. In Case I, $u$ satisfies
$$
I(u) \not \subseteq e
$$
while in Case II, $u$ satisfies $$
I(u) \subseteq e
$$
since these one-coordinates only come from $S$ and $i$ itself.

\subsection{Bounding the incoming charges}
To complete the proof, it remains to bound the charge received by a fixed right-hand weight. We will show specifically that, for every $u\in U$, we have
\[
        \sum_{w\in X} a(w,u)\leq 1.
\]

So fix $u\in U$, and let $e$ be its isolated edge. 
Note that $I(u) \neq \emptyset$ since $u$ is in layer one. There are again two cases, which we call Case A and Case B to distinguish them from the outgoing cases.

\subsection*{Case A: $I(u) \not \subseteq e$.}
As noted above, $u$ can only receive charge from Case I, and the isolated edge of $u$ is precisely the chosen edge $e$ in $w$. So the preimage of $u$, if it has any, would be forced to be
\[
        w=u+\ch{e}
\]
and so $u$ receives total charge at most one in this situation.

\subsection*{Case B: $I(u) \subseteq e$.}

Consider some $w \in X$ sending charge to $u$. As noted above, this must have been via Case II.  The original unique one-coordinate in the preimage $w$ must be some $i\in I(u)$.  Moreover, if the
decremented set is $S$, then
\[
       I(u) \setminus \{i \} \subseteq  S\subseteq e\setminus\{i\}.
\]
The first containment holds because every vertex in $I(u) \setminus\{i\}$ must have been decremented from weight two in
$w$.

Ignoring all further restrictions on such preimages can only increase the total
possible incoming charge.  Thus the Case II charge received by $u$ is at most
\begin{equation}\label{eq:incoming-sum}
\sum_{i\in I(u)}
\left[
\sum_{I(u)\setminus\{i\}\subseteq
S\subseteq e\setminus\{i\}}
\frac{1}{|e| \binom{|e|-1}{|S|}}
\right].
\end{equation}

By \Cref{binom-sum} applied with $A = e \setminus \{i \}, T = I(u) \setminus \{i \}$, we have
$$
        \sum_{I(u) \setminus \{i \} \subseteq S\subseteq e\setminus\{i\}}
        \frac{1}{ \binom{|e|-1}{|S|}}  = \frac{|e|}{|I(u)|}
        $$
        and hence the total incoming charge to $u$, summed over $i \in I(u)$, is indeed at most one.

\subsection{Putting it together}
At this point, we can finish the proof of \Cref{thm:layer-one}. We have shown that 
\begin{align*}
\sum_{u \in U} a(w,u) &\geq 1 \qquad \text{for all $w \in X$} \\
\sum_{w \in X} a(w,u) &\leq 1 \qquad \text{for all $u \in U$}
\end{align*}

So 
\begin{align*}
|U| &= \sum_{u \in U} 1 \geq \sum_{u \in U} \sum_{w \in X} a(w,u) \\
&= \sum_{w \in X} \sum_{u \in U} a(w,u) \geq \sum_{w \in X} 1  \\
&= |X| = n (\dd-1)^{n-1}.
\end{align*}

This concludes the proof of \Cref{thm:layer-one} and hence \Cref{thm:main}.

\section{Extensions}

Note that \Cref{thm:main} requires the hypergraph $H$ to be inclusion-free. An equivalent formulation is to allow $H$ to be arbitrary and require that $f$ be an inclusion-monotone set function, i.e. $f(e) \leq f(h)$ for $e \subseteq h$. In this case, given nested edges $e \subsetneq h$, the edge $h$ can never be a minimum-weight edge and can simply be removed. The resulting hypergraph becomes inclusion-free automatically.

\medskip

Even more generally, the proof of \Cref{thm:layer-one} uses only two properties of the
objective function. Namely, suppose that $H$ is an arbitrary hypergraph (not necessarily inclusion-free), and for each \(w\in[d]^n, e\in H\)
we have an objective value \(F_w(e)\) in some fixed totally-ordered set. Define
\begin{align*}
\mathcal M_F(w) &=\{e\in H:F_w(e)\leq F_w(h)\text{ for all }h\in H\} \\
        Z(H,\dd,F) &=\{w\in[\dd]^n \st |\mathcal M_F(w)| = 1 \} \\
        Z_j(H,\dd,F)&=\{w\in Z(H,\dd,F) \st \text{$w$ in layer $j$} \}, \qquad \text{for $j \in [\dd]$}
        \end{align*}
        
If we assume that, for every weight assignment $w \in [d]^n$, the two properties hold:
\begin{enumerate}
\item[(A)] $\calM_F(w)$ is an antichain
    \item [(B)] for every private pair $(e,S)$ of $\calM_F(w)$ with $w - \chi_S \in [d]^n$, there holds $\calM_F(w - \chi_S) = \{ e \}.$
\end{enumerate}
then the proof of \Cref{thm:layer-one} applies without change and gives
\[
    |Z_1(H,d,F)|\geq n(d-1)^{n-1},
\]
and, via the shifting argument used in the proof of \Cref{thm:main},
\[
    |Z(H,d,F)|\geq n\sum_{j=0}^{d-1}j^{n-1}.
\]

For example, these properties hold for a function of the form
\[
    F_w(e)=f(e) + \sum_{i\in e} g_i(w(i))
\]
where $H$ is inclusion-free and each \(g_i:[d]\to\mathbb R\) is strictly increasing.  Some applications of the Isolation Lemma (e.g. \cite{Nara}) require such generalized objective functions.

\bigskip

\section*{Acknowledgments}
The proof benefited from ChatGPT's assistance in discovering the fractional
 charging argument.  The authors have independently verified the argument and
 take responsibility for the final text.

\end{document}